\documentclass[12pt]{article}
\usepackage{fullpage}
\usepackage{amsthm,amsmath,amsfonts,amssymb}
\usepackage{xcolor}
\usepackage{bbm}
\usepackage{enumerate}
\usepackage{hyperref}
\usepackage[capitalize]{cleveref}
\usepackage{tikz}
\usetikzlibrary{arrows,shadows}

\usepackage{nicematrix}

\newcommand{\footremember}[2]{%
   \footnote{#2}
    \newcounter{#1}
    \setcounter{#1}{\value{footnote}}%
}
\newcommand{\footrecall}[1]{%
    \footnotemark[\value{#1}]%
}
\newtheorem{theorem}{Theorem}
\newtheorem{definition}{Definition}
\newtheorem{lemma}{Lemma}
\newtheorem{corollary}{Corollary}
\newtheorem{claim}{Claim}

\newcommand{\ignore}[1]{}

\newif\ifnotesw\noteswtrue% T to show box & marginal notes; F suppresses.
   {\ifnotesw\marginpar[\hfill\(\top\)]{\(\top\)}\fi}%
   {\ifnotesw\marginpar[\hfill\(\bot\)]{\(\bot\)}\fi}

\newcommand{\mnote}[1]%
    {\ifnotesw\marginpar%
        [{\scriptsize\begin{minipage}[t]{\marginparwidth}
        \raggedleft#1%
                        \end{minipage}}]%
        {\scriptsize\begin{minipage}[t]{\marginparwidth}
        \raggedright#1%
                        \end{minipage}}%
    \fi}
\newcommand{\bra}[1]{\langle #1 |}

\newcommand{\ket}[1]{| #1 \rangle}

\newcommand{\braket}[2]{\langle #1 | #2 \rangle}
\newcommand{\lip}[2]{\langle #1 , #2 \rangle}
\newcommand{\ketbra}[2]{\ket{#1}\bra{#2}}
\newcommand{\etal}{{\em et al.}~}
\newcommand{\iverson}[1]{\lbrack\!\lbrack #1 \rbrack\!\rbrack}

\newcommand{\ZZ}{\mathbb{Z}}

\newcommand{\CC}{\mathbb{C}}
\newcommand{\OO}{\mathcal{O}}
\newcommand{\Exp}{\mathbb{E}}
\newcommand{\one}{\mathbbm{1}}
\DeclareMathOperator{\Sp}{Sp}
\DeclareMathOperator{\Tr}{Tr}
\DeclareMathOperator{\diag}{diag}

\DeclareMathOperator{\cir}{Circ}
\DeclareMathOperator{\Arg}{Arg}
\newcommand{\cay}{\Gamma}

\title{Uniform Mixing in Chiral Quantum Walks}
\author{Luke Levine\footremember{clarkson}{Computer Science, Clarkson University. Email: \{levinelj,mesapaj,musticbt,tino,tuckergs\}@clarkson.edu.}
\and
Jessy Jacob Mesapam\footrecall{clarkson}
\and
Benjamin Mustico\footrecall{clarkson}
\and
Christino Tamon\footrecall{clarkson}
\and
Gabriel Tucker\footrecall{clarkson}
\and
Hanmeng Zhan\footremember{wpi}{Computer Science, Worcester Polytechnic Institute. Email: hzhan@wpi.edu.}
}
\date{\today}
\begin{document}
\maketitle

\begin{abstract}
This paper studies uniform mixing in continuous-time quantum walks.
We show that for some unitary signing $\sigma$, the complete graph $K^\sigma_n$ has probabilistic uniform mixing. 
In contrast, Ahmadi \etal (2003) proved that {\em no} complete graph has uniform mixing except for $K_2$, $K_3$, and $K_4$. 
Our technique is based on a stopping rule for quantum walks which reduces global to local uniform mixing.
As a corollary, we found an orientation of $H(n,4)$ that mixes to uniform faster than any other Hamming graphs,
which improves a result of Godsil and Zhan (2019).
We also show that there are infinite families of oriented circulants with {\em average} uniform mixing.
This is a chiral violation of a {\em No-Go} theorem due to Godsil (2013) which states that no graph 
has average uniform mixing except for $K_2$.
\vspace{0.1in}
\par\noindent{\em Keywords}: quantum walk, uniform mixing, oriented graph, unitary signing.
%\par\noindent{\em MSC}: 05C50
\end{abstract}

%%%%%%%%%%%%%%%%%%%%%%%%%%%%%%%%%%%%%%%%%%%%%%%%%%%%%%%%%%%%%%%%%%%%%%%%%%%%%%%%%%%%%%%%%%%%%%%%%%%%%
\section{Introduction}

Quantum walk on graphs is a natural quantum analogue of random walk on graphs.
It has proved useful in developing quantum communication protocols (for example, entanglement generation)
and quantum algorithms for important computational problems (for example, Grover search).

Given a graph $X$ with adjacency matrix $A(X)$, the continuous-time quantum walk on $X$ is given by
the time-varying unitary matrix $U_X(t) = \exp(-iA(X)t)$. The mixing matrix $M_X(t)$ is obtained
from $U_X(t)$ by taking entry-wise squared norm (which is the quantum mechanical Born rule for 
the quantum to classical transition). In uniform mixing at time $t$, we require all entries of $M_X(t)$ 
to be the constant $1/n$, where $n$ is the number of vertices of $X$.
This implies that if we measure the quantum walk at time $t$, then the corresponding probability
distribution is uniform over the vertices of $X$.

In their seminal result, Moore and Russell \cite{mr02} initiated the study of uniform mixing in 
continuous-time quantum walks. They observed that a quantum walk on the $n$-cube exactly mixes to
uniform at time $\frac{\pi}{4}n$. In contrast, a classical random walk on the $n$-cube requires $\Omega(n\log n)$
to converge to the uniform distribution in the limit. This shows a quantum advantage for uniform mixing
in both the quantitative (faster time) and qualitative (exact mixing versus mixing in the limit) aspects.

Another motivation for exploring uniform mixing is provided by the {\em W state} (see \cite{dvc00,c02}).
For three qubits, the W state is given by 
$e_{100}+e_{010}+e_{001}$.
%$\ket{100}+\ket{010}+\ket{001}$.
Here, $e_{100}$ denote the tensor product $e_1 \otimes e_0 \otimes e_0$ where $e_0$ and $e_1$ 
are the standard basis vectors of $\CC^2$.
Note that this can be generalized to $n$ qubits and is one of the target states for uniform mixing.
This useful entangled quantum state has interesting applications for secure communication in quantum networks 
(such as anonymous transmission \cite{lmw18} and leader election \cite{dp06}).
In certain applications, the W state is known to be more robust compared to the well-known GHZ state 
$e_{000}+e_{111}$.
%$\ket{000}+\ket{111}$.

It is known that classical random walks rapidly mix on the complete graphs. On the other hand,
Ahmadi \etal \cite{abtw03} observed curiously that no complete graph $K_n$ has uniform mixing except
for $K_2,K_3$, and $K_4$. In this work, we show that for any $K_n$ there is a unitary signing $\sigma$
so that $K_n^\sigma$ has uniform mixing in expected $n^{3/2}$ time. Our result is based on a simple
stopping rule which reduces uniform mixing to {\em local} uniform mixing. This is reminiscent of
stopping rules for exact mixing in Markov chains (see \cite{lw95}) and is inspired by a similar
technique used in Xie \etal \cite{xkt23} (on anti-Zeno effects in perfect state transfer). 

As an implicit corollary, we show there is an oriented Hamming graph $H(n,4)$ which has uniform mixing
at time $\pi/3\sqrt{3}$. This is {\em faster} than the mixing times of any unoriented Hamming graphs,
which include $H(n,2)$, $H(n,3)$, and $H(n,4)$, and the Cartesian product of $K_{1,3}$ with itself 
(which has uniform mixing at time $2\pi/3\sqrt{3}$).

\begin{figure}[h]
\begin{center}
\begin{tabular}{|c||c|c|} \hline
Graph family			& Mixing Time 		& Reference \\ \hline
$H(n,2)$				& $\pi/4$ 			& Moore-Russell \cite{mr02} \\
$H(n,3)$				& $2\pi/9$ 			& folklore \\
$H(n,4)$				& $\pi/4$ 			& folklore \\
$(K_{1,3})^{\Box n}$ 	& $2\pi/3\sqrt{3}$ 	& Godsil-Zhan \cite{gz17} \\
oriented $H(n,4)$		& $\pi/3\sqrt{3}$	& this work \\ \hline
\end{tabular}
\caption{Quantum uniform mixing times.}
\end{center}
\end{figure}

Our speedup in mixing time is a simple consequence of the signing on $K_4$ which improves the mixing 
profile of $K_{1,3}$. More specifically, the signing of $K_4$ allows us to treat any vertex as the
conical vertex in $K_{1,3}$ (where the mixing time is $\pi/3\sqrt{3}$). However, if we start 
at a pendant vertex in $K_{1,3}$, we incur a slower mixing time of $2\pi/3\sqrt{3}$.

For the second part of this work, we consider {\em average} mixing in quantum walks.
The average mixing matrix $\widehat{M}_X$ is the Cesaro limit of its mixing matrix,
that is, the limit of $\frac{1}{T}\int_{0}^{T} M_X(t) dt$ as $T$ tends to infinity (see \cite{aakv01}). 
As this limit always exists, $\widehat{M}_X$ provides a long-term profile of the quantum walk on $X$.
So, it is natural to ask which graphs $X$ admit uniform mixing for its $\widehat{M}_X$.

Unfortunately, Godsil \cite{g13} proved that $K_2$ is the {\em only} graph with average uniform mixing.
In contrast, we show that there are infinite families of oriented circulants with average uniform mixing.
Recently, Sin \cite{s25} discovered examples of oriented non-abelian Cayley graphs with uniform mixing.
Here, we show that {\em no} oriented non-abelian Cayley graphs have {\em average} uniform mixing.
These results suggest an interesting interplay between chirality and group-theoretic properties of
the underlying Cayley graphs.

%%%%%%%%%%%%%%%%%%%%%%%%%%%%%%%%%%%%%%%%%%%%%%%%%%%%%%%%%%%%%%%%%%%%%%%%%%%%%%%%%%%%%%%%%%%%%%%%%%%%%
\section{Preliminaries}

We briefly summarize terminology from graph theory and matrix theory that will be used throughout.
Our standard references are Godsil and Royle \cite{gr01} and Horn and Johnson \cite{hj13}.
We adopt the Iverson notation $\iverson{\Phi}$ for the characteristic function of 
a logical statement $\Phi$. 

For any integer $n \ge 1$, we let $I_n$ and $J_n$ denote the identity and all-one matrices of order $n$, 
respectively. The all-one vector is denoted $\one_n$. Whenever the context is clear, we will omit the 
subscript $n$ for simplicity. The unit vector $e_a$ satisfies $e_a(b) = \iverson{a=b}$.

Given a simple undirected graph $X$, we denote its vertex set as $V(X)$ and its edge set as $E(X)$. 
The order of $X$ refers to the number of vertices in $X$, namely $|V(X)|$.
The adjacency matrix $A(X)$ of $X$ is usually defined as $A(X)_{a,b} = \iverson{ab \in E(X)}$.

The complement of a graph $X$ is a graph $\overline{X}$ whose vertex set is $V(X)$ but with edge set
given by $E(\overline{X}) = \{ab \ : \ ab \not\in E(X)\}$. Note that $A(\overline{X})=J-I-A(X)$.
The {\em join} of two graphs $X$ and $Y$, denoted $X + Y$, is obtained from disjoint union of $X$ and $Y$
by adding the edges $(x,y)$ for all $x \in X$ and $y \in Y$. 
The Cartesian product $X \Box Y$ of $X$ and $Y$ is a graph whose vertex set is $V(X) \times V(Y)$ where
$(x_1,y_1)$ is adjacent to $(x_2,y_2)$ if either $x_1=x_2$ and $(y_1,y_2) \in E(Y)$, or, 
$(x_1,x_2) \in E(X)$ and $y_1=y_2$.
The adjacency matrix of $X \Box Y$ is given by $A(X) \otimes I + I \otimes A(Y)$.

We use $K_n$ to denote the complete graph of order $n$ while $\overline{K}_n$ denotes its complement.
The {\em cone} of a graph $X$, which is $K_1 + X$, will be denoted as $\widehat{X}$.
The {\em claw} $K_{1,n}$ is the cone of $\overline{K}_n$.

A {\em signing} of $X$ is a function $\sigma:E(X) \rightarrow \CC^\times$ defined on the edges of $X$.
We denote the resulting ``signed'' graph as $X^\sigma$ with adjacency matrix 
$A(X^\sigma) = \sigma(ab)\iverson{ab \in E(X)}$ which is required to be Hermitian.
We call the signing $\sigma$ unitary if $|\sigma(ab)|=1$.
If the range of $\sigma$ is $\pm i$, we call $X^\sigma$ an {\em oriented} graph.
An oriented graph whose underlying graph is $K_n$ can be viewed as a {\em tournament}.

Two signed graphs $X^{\sigma_1}$ and $X^{\sigma_2}$ obtained from the same underlying graph $X$
is called {\em switching equivalent}, denoted $X^{\sigma_1} \cong X^{\sigma_2}$, 
if there is a complex diagonal matrix $D$ so that
\begin{equation} \label{eqn:switching-equivalent}
	A(X^{\sigma_2}) = D^\dagger A(X^{\sigma_1}) D.
\end{equation}
See \cref{fig:signing} for an example of a signing on $K_4$ which is switching equivalent to $K_1 + \vec{K}_3$.
That is,
\[
	\begin{pmatrix}
	0 & -i & -i & -i \\
	i & 0  & -i & i  \\
	i & i  & 0  & -i \\
	i & -i & i  & 0
	\end{pmatrix}
	\cong	
	\begin{pmatrix}
	0 &  1 & 1  & 1 \\
	1 &  0 & -i & i  \\
	1 & i  & 0  & -i \\
	1 & -i & i  & 0
	\end{pmatrix}.
\]

\begin{figure}[b]
\begin{center}
\begin{minipage}[t]{0.45\textwidth}
\vspace{-0.1\baselineskip}
\[
	\begin{pNiceMatrix}[first-row,first-col]
	  & 0 & 1 & 2 & 3 \\
	0 & 0 & -i & -i & -i \\
	1 & i & 0  & -i & i  \\
	2 & i & i  & 0  & -i \\
	3 & i & -i & i  & 0
	\end{pNiceMatrix}.
\]
\end{minipage}%\hfill
\begin{minipage}[t]{0.45\textwidth}
\vspace{0pt}
\begin{tikzpicture}[-,>=stealth',auto,node distance=1.75cm,thick,
        main node/.style={circle,draw,font=\bfseries},
        main edge/.style={-,>=stealth'}, weird edge/.style={red,very thick}]
\tikzset{every loop/.style={min distance=5mm, in=65, out=115}}
% Oriented K4 
  \node[main node, circular drop shadow] (0) {$0$};
  \node[main node, circular drop shadow] (1) [below right of=0] {$3$};
  \node[main node, circular drop shadow] (2) [below left of=0] {$1$};
  \node[main node, circular drop shadow] (3) [below right of=2] {$2$};

  \path[->]
    (0) edge node {} (1)
    (0) edge node {} (1)
    (0) edge node {} (2)
    (0) edge node {} (3)
    (1) edge node {} (2)
    (2) edge node {} (3)
    (3) edge node {} (1);
\end{tikzpicture}
\end{minipage}
\caption{A unitary signing of $K_4$.
This is switching equivalent to $K_1 + \vec{K}_3$.}
\label{fig:signing}
\end{center}
\end{figure}

\subsection{Quantum walks}

Given a graph $X$, let $A(X)$ denote its adjacency matrix whose spectral decomposition is
\[
    A(X) = \sum_r \lambda_r E_r
\]
where $E_r$ are the orthogonal projectors onto the $\lambda_r$-eigenspace.
The continuous-time quantum walk on $X$ is defined as the unitary matrix 
\[
	U_X(t) = \exp(-itA(X)) = \sum_r e^{-i\lambda_r t} E_r
\]
where the second equality followed from the spectral theorem.

We observe that quantum walks on switching equivalent graphs are similar.
This holds as
\begin{equation} \label{eqn:switching-equivalent-qwalks}
\exp(-itA(X^{\sigma_2})) = D^\dagger \exp(-itA(X^{\sigma_1})) D
\end{equation}
by \cref{eqn:switching-equivalent} and $D^\dagger D = I$.

The {\em mixing} matrix of a quantum walk is given by
\[
    M_X(t) = U_X(t) \circ \overline{U_X(t)}
\]
where $A \circ B$ denotes the Schur product of matrices defined by $(A \circ B)_{jk} = A_{jk}B_{jk}$.
Note that the $(a,b)$-entry of $M_X(t)$ encodes the probability of measuring
$b$ at time $t$ when starting at $a$ (according to the Born rule of quantum mechanics).

\begin{definition} 
A graph $X$ of order $n$ has {\em uniform mixing} at time $t$ if $M_X(t) = \frac{1}{n}J_n$.
On the other hand, we say $X$ has {\em local} uniform mixing from vertex $a$ at time $t$ if
$M_X(t)e_a = \frac{1}{n}\one_n$.
\end{definition}

We state a useful property of uniform mixing under Cartesian graph products.

\begin{lemma} \label{lemma:closure} (Moore and Russell \cite{mr02}) \\
If $X$ has local uniform mixing from vertex $a$ at time $t$ and
$Y$ has local uniform mixing from vertex $b$ at time $t$, then
$X \Box Y$ has local uniform mixing from vertex $(a,b)$ at time $t$.
\end{lemma}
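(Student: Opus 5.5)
The plan is to factor the walk on $X \Box Y$ as a tensor product, read off its column at $(a,b)$, and check that every entry of that column has squared modulus $1/(nm)$. Let $n=|V(X)|$ and $m=|V(Y)|$.

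First, I will use the fact, recalled in the preliminaries, that $A(X\Box Y) = A(X)\otimes I + I\otimes A(Y)$. The two summands commute, since $(A(X)\otimes I)(I\otimes A(Y)) = A(X)\otimes A(Y) = (I\otimes A(Y))(A(X)\otimes I)$. Hence the exponential splits:
\[
U_{X\Box Y}(t) = \exp(-itA(X)\otimes I)\,\exp(-itI\otimes A(Y)) = U_X(t)\otimes U_Y(t).
\]
For the second equality I use $\exp(B\otimes I)=\exp(B)\otimes I$, which follows directly from the power series.

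Next, I will show that the Schur product is compatible with the Kronecker product: $(P\otimes Q)\circ(R\otimes S) = (P\circ R)\otimes(Q\circ S)$. This holds entrywise, because the $((x,y),(x',y'))$ entry of either side is $P_{xx'}Q_{yy'}R_{xx'}S_{yy'}$. Also, entrywise conjugation commutes with $\otimes$. Combining these gives $M_{X\Box Y}(t)=M_X(t)\otimes M_Y(t)$.

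Finally, applying this to $e_{(a,b)} = e_a\otimes e_b$ gives
\[
M_{X\Box Y}(t)\,e_{(a,b)} = M_X(t)e_a \otimes M_Y(t)e_b = \tfrac1n\one_n\otimes \tfrac1m\one_m = \tfrac{1}{nm}\one_{nm}.
\]
This is local uniform mixing from $(a,b)$ at time $t$.

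I expect no real obstacle. The only points needing care are justifying the splitting of the exponential via commutativity, and the mixed-product identity for Schur and Kronecker products.

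The same argument works verbatim for signed graphs $X^\sigma$ and $Y^\tau$, since it only uses that the adjacency matrices are Hermitian. This matters for the paper's later application to the oriented $H(n,4)$.
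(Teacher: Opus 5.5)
Your proof is correct. It is the standard argument: the paper gives no proof of its own and simply cites Moore and Russell, whose argument is exactly your factorization $U_{X\Box Y}(t)=U_X(t)\otimes U_Y(t)$, hence $M_{X\Box Y}(t)=M_X(t)\otimes M_Y(t)$, applied to $e_a\otimes e_b$. Your closing remark that only $A(X\Box Y)=A(X)\otimes I+I\otimes A(Y)$ is used is what justifies the paper's later use of the lemma for the signed product $(K_4^\sigma)^{\Box n}$.
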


\paragraph{Measured walks}
In our quantum walks, we allow partial measurements at a vertex.
Formally, if $a \in V(X)$, the partial measurement relative to vertex $a$ is given by
the set $\frak{M} = \{P_0,P_1\}$ of projection operators, where
\[
	P_1 = e_a e_a^T
%\ketbra{a}{a}, 
	\ \ \
	P_0 = I - e_a e_a^T.
%I - \ketbra{a}{a}.
\]
If we apply the measurement operator $\frak{M}$ to a state 
${\psi} = c_a e_{a} + \sum_{b \neq a} c_b e_{b}$, 
%$\ket{\psi} = c_a\ket{a} + \sum_{b \neq a} c_b\ket{b}$, 
where $|c_a|^2 + \sum_{b \neq a} |c_b|^2 = 1$,
then the outcome is
\[
	\frak{M}{\psi} \mapsto
	\left\{
	\begin{array}{ll}
	1 & \mbox{ with probability $|c_a|^2$ } \\
	0 & \mbox{ with probability $1-|c_a|^2$}
	\end{array}\right.
\]
The corresponding {\em post-measurement} states for each outcome are given by
\[
	\left\{
	\begin{array}{ll}
	e_{a} & \mbox{ if outcome is $1$,} \\
	\frac{1}{\sqrt{1-|c_a|^2}}\sum_{b \neq a} c_b e_{b} & \mbox{ if outcome is $0$.}
	\end{array}\right.
\]
For more information on quantum measurements, see Nielsen and Chuang \cite{nc00} (page 87).

\subsection{Quotients}

We summarize useful facts about equitable partitions for {\em Hermitian} matrices (see Haemers \cite{h95}).
Let $A$ be a $n \times n$ Hermitian matrix and let $\pi = \bigsqcup_{j=1}^{m} V_j$ be a partition of $[n]$.
We call each $V_j$ a cell of $\pi$.
Let $A[V_j,V_k]$ be the submatrix of $A$ whose rows are indexed by elements in $V_j$ and whose
columns are indexed by elements in $V_k$. Let $n_j = |V_j|$ denote the size of cell $V_j$.

\begin{definition} (Complex equitable partition)
The partition $\pi$ is called {\em equitable} 
if each submatrix $A[V_j,V_k]$ has a {\em constant row sum} and a {\em constant column sum}.
That is, for each $j,k$, there are complex constants $r_{jk},c_{jk} \in \CC$ so that
\begin{align}
A[V_j,V_k]\one_{n_k} 	&= r_{jk}\one_{n_j} \ \ \ \mbox{ (constant row sum)} \\
\one_{n_j}^TA[V_j,V_k] 	&= c_{jk}\one_{n_k}^T \ \ \ \mbox{ (constant column sum).}
\end{align}
\end{definition}

As $A$ is Hermitian, $r_{jk} = \overline{c_{kj}}$. In particular, this implies that $r_{jj}$ and $c_{jj}$ are both real.
Let $S$ be the normalized characteristic $n \times m$ matrix of the partition $\pi$ where 
\begin{equation} \label{eqn:char-mat}
S_{jk} = \frac{1}{\sqrt{n_k}}\iverson{j \in V_k}.
\end{equation}
As $S$ is the matrix representation of $\pi$, we often refer to $S$ as {\em the} equitable partition.

\begin{lemma} \label{lemma:char-mat}
The characteristic matrix $S$ satisfies the following:
\begin{enumerate}[(i)]
\item $S^T S = I_m$.
\item $SS^T$ is a block diagonal $m \times m$ matrix $\diag(D_1,\ldots,D_m)$, where $D_j = \frac{1}{n_j}J_{n_j}$.
\item $[SS^T,A] = 0$.
\end{enumerate}

\begin{proof}
{\em (i)} This follows since the $j$th column of $S$ is the normalized characteristic vector of $V_j$ 
and the subsets $V_j$ are disjoint. 
{\em (ii)} This holds since the outer product of the normalized characteristic vector of $V_j$ 
is the normalized all-one matrix $B_j = \frac{1}{n_j}J_{n_j}$.
{\em (iii)} It suffices to show $B_j$ commutes with $A[V_j,V_j]$ for each $j$.
Note $B_j A[V_j,V_j] = c_{jj} J_{n_j}$ and $A[V_j,V_j]B_j = r_{jj} J_{n_j}$.
As $r_{jj} = c_{jj}$, we are done.
\end{proof}
\end{lemma}

\par\noindent
The {\em quotient} matrix of $A$ relative to $\pi$ is the $m \times m$ matrix defined by
\[
	B := S^T A S.
\]

\begin{claim}
The quotient matrix $B$ is a Hermitian $m \times m$ matrix whose $jk$-entry is given by
\[
	B_{jk} = \sqrt{|r_{jk}| |c_{jk}|} \ e^{i\Theta}
\]
where $\Theta = \Arg(r_{jk}) = \Arg(c_{jk})$.

\begin{proof}
The $jk$-entry of $B$ is given by
\[
	B_{jk} = e_j^T(S^T A S)e_k = (Se_j)^T A (Se_k).
\]
Therefore, as $Se_j = \frac{1}{\sqrt{n_j}}\one_{n_j}$ and similarly for $Se_k$, we have
\[
	B_{jk} = c_{jk} \sqrt{\frac{n_k}{n_j}} = r_{jk} \sqrt{\frac{n_j}{n_k}}.
\]
This shows that $r_{jk}$ and $c_{jk}$ have the same principal argument, that is, $\Arg(r_{jk}) = \Arg(c_{jk})$,
since $n_j$ and $n_k$ are both real.
Also, since $n_j r_{jk} = n_k c_{jk}$, we have
\[
	B_{jk} = \sqrt{|r_{jk}| |c_{jk}|} \ e^{i\Arg(r_{jk})}.
\]

To show $B$ is Hermitian, note that
\[
	B_{kj} = \sqrt{|r_{kj}| |c_{kj}|} \ e^{i\Arg(r_{kj})}
	= \sqrt{|c_{jk}| |r_{jk}|} \ e^{-i\Arg(r_{jk})}
	= \overline{B_{jk}}
\]
as $r_{kj} = \overline{r_{jk}}$. 
\end{proof}
\end{claim}

\begin{lemma} \label{lemma:quotient-walk}
The quantum walk on $B$ and the quantum walk on $A$ are related as follows:
\[
	e^{-iBt} = S^T e^{-iAt} S.
\]
Moreover, if $V_i = \{u\}$ and $V_j = \{v\}$ are singleton cells, then
\[
	\lip{e_{V_j}}{e^{-iBt} e_{V_i}} = \lip{e_{v}}{e^{-iAt} e_{u}}.
\]

\begin{proof}
The first claim holds since $(S^T A S)^k = S^T A^k S$, for every $k \ge 0$. 
The second claim follows from $Se_{V_i} = e_u$ and $Se_{V_j} = e_v$.
\end{proof}
\end{lemma}

%%%%%%%%%%%%%%%%%%%%%%%%%%%%%%%%%%%%%%%%%%%%%%%%%%%%%%%%%%%%%%%%%%%%%%%%%%%%%%%%%%%%%%%%%%%%%%%%%%%%%
\section{Conical reduction}

The following impossibility result for uniform mixing is known for cliques.

\begin{theorem} (Ahmadi \etal \cite{abtw03}) \label{thm:nogo-clique}
$K_n$ has uniform mixing if and only if $n=2,3,4$. 
\end{theorem}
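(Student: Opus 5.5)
Since $A(K_n)=J-I$, the walk has an explicit closed form, and the plan is to reduce uniform mixing to a condition on a single complex number. Using the spectral decomposition $J = n\cdot\frac1nJ$, we get
\[
U_{K_n}(t) = e^{it}\Bigl(e^{-int}\tfrac1n J + \bigl(I-\tfrac1n J\bigr)\Bigr) = e^{it}\Bigl(I + \tfrac{e^{-int}-1}{n}J\Bigr).
\]
Set $z = e^{-int}-1$, a point of the circle of radius $1$ centred at $-1$.

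Reading off the entries, the off-diagonal moduli squared are $|z|^2/n^2$ and the diagonal ones are $|1+z/n|^2$. Uniform mixing requires both to equal $1/n$. The off-diagonal condition gives $|z|^2=n$. Since $|z|\le 2$, this forces $n\le 4$, which is the ``only if'' direction for every $n\ge 5$ (and $n=1$ is excluded or trivial by convention).

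For the diagonal condition, expand
\[
|1+z/n|^2 = 1 + \tfrac{2}{n}\operatorname{Re} z + \tfrac{|z|^2}{n^2}.
\]
On the circle $|z+1|=1$ we have $|z|^2 = -2\operatorname{Re} z$. Substituting $|z|^2=n$ gives
\[
|1+z/n|^2 = 1 - \tfrac{n}{n} + \tfrac1n = \tfrac1n,
\]
so the diagonal condition follows automatically from the off-diagonal one. Uniform mixing is therefore equivalent to the existence of $t$ with $|e^{-int}-1|^2=n$, that is, $2-2\cos(nt)=n$, or $\cos(nt)=1-n/2$.

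This equation has a real solution exactly when $|1-n/2|\le 1$, i.e.\ $n\le 4$. Explicit times for the ``if'' direction are:
\begin{itemize}
\item $n=2$: $\cos 2t=0$, so $t=\pi/4$;
\item $n=3$: $\cos 3t=-1/2$, so $t=2\pi/9$;
\item $n=4$: $\cos 4t=-1$, so $t=\pi/4$.
\end{itemize}
These agree with the table. No step is a real obstacle; the only point needing care is noticing that the diagonal condition is implied by the off-diagonal one via the identity $|z|^2=-2\operatorname{Re} z$.
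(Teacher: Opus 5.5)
Your proof is correct and complete. The closed form $U_{K_n}(t)=e^{it}\bigl(I+\frac{e^{-int}-1}{n}J\bigr)$ is right, the bound $|e^{-int}-1|^2=2-2\cos(nt)\le 4$ settles the ``only if'' direction, and the times $\pi/4$, $2\pi/9$, $\pi/4$ are right. You are also right to flag that $K_1$ must be excluded by convention.

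The paper gives no proof of this statement; it only cites Ahmadi et al.~\cite{abtw03}. Your argument is the natural self-contained spectral computation that the citation stands in for.

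One step can be shortened. The diagonal check follows at once from unitarity: each row of $M_{K_n}(t)=U\circ\overline{U}$ sums to $1$. So once the $n-1$ off-diagonal entries equal $1/n$, the diagonal entry is forced to equal $1/n$ as well.
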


In contrast to \cref{thm:nogo-clique}, we show that the complete graphs admit probabilistic\footnote{In the theory
of randomized algorithms, a {\em Las Vegas} algorithm is always correct but whose running time is a random variable, while
a {\em Monte Carlo} algorithm has a bounded running time but might make mistakes. See Motwani and Raghavan's excellent book \cite{mr95}.}
uniform mixing (of Las Vegas type).

\begin{theorem} \label{thm:conical}
Let $X$ be a graph of order $n$ whose Hermitian adjacency matrix $A$ has zero as a simple eigenvalue 
with the corresponding eigenvector $\one_n$. 
Then, $\widehat{X}$ has uniform mixing in expected $n^{3/2}$ time.

\begin{proof}
The adjacency matrix of $\widehat{X}$ is given by
\[
	A(\widehat{X}) = 
	\begin{pmatrix}
	0 & \one_n^T \\
	\one_n & A(X)
	\end{pmatrix}
\]
whose eigenvalues are $\pm\sqrt{n}$ and $\theta \in \Sp(A(X))$ with corresponding eigenvectors
\[
	\psi_{\pm} = \frac{1}{\sqrt{2}}
	\begin{pmatrix}
	1 \\
	\pm\frac{1}{\sqrt{n}}\one_n
	\end{pmatrix},
	\ \ \
	\psi_{\theta} =
	\begin{pmatrix}
	0 \\
	z_\theta
	\end{pmatrix}
\]
where $z_\theta$ is a normalized eigenvector of $A(X)$ corresponding to a nonzero eigenvalue $\theta$.

In the quantum walk, the start vertex is either conical or not.
If the start vertex is the conical vertex $0$, then
\[
	e^{-i A(\widehat{X})t} e_{0} 
	= \sum_\pm \frac{e^{\mp i\sqrt{n}t}}{\sqrt{2}} \psi_{\pm} 
	= \begin{pmatrix} \cos(\sqrt{n}t) \\ -\frac{i}{\sqrt{n}}\sin(\sqrt{n}t) \one_n \end{pmatrix}
\]
which shows uniform mixing at $t_0 = \frac{1}{\sqrt{n}}\arccos(\frac{1}{\sqrt{n+1}}) \approx \frac{\pi}{2\sqrt{n}}$.

On the other hand, if the start vertex is not conical, say $e_u$, then:
\[
	e^{-i A(\widehat{X})t} e_{u} 
	= \sum_\pm \frac{\pm e^{\mp i\sqrt{n}t}}{\sqrt{2n}} \psi_{\pm} 
		+ \sum_{\theta \neq 0} \lip{e_u}{\psi_\theta} e^{-i\theta t}\psi_{\theta}.
\]
This implies that the amplitude at the conical vertex is given by
\[
	\lip{e_0}{e^{-i A(\widehat{X})t} e_{u}} 
	= \sum_\pm \frac{\pm e^{\mp i\sqrt{n}t}}{2\sqrt{n}} 
	= -\frac{i}{\sqrt{n}}\sin(\sqrt{n}t).
\]
By linearity, this extends to any normalized vector that is orthogonal to $e_0$.
At time $t_1 = \pi/2\sqrt{n}$, the quantum walk is at the conical vertex with probability $1/n$.
Thus, if we perform a partial measurement $\{P_0=e_{0}e_{0}^T, P_1=I-e_{0}e_{0}^T\}$, the expected waiting time
for the measurement to return $P_0$ is $n$. Once the quantum walk arrives at the conical vertex, we appeal to the first case
and will achieve uniform mixing after time $t_0$. 
Let $T$ be the random variable for the total time until uniform mixing. Then,
\[
	\Exp[T] = n \times \frac{\pi}{2\sqrt{n}} + t_0 = \OO(\sqrt{n}).
\]
But, after scaling the adjacency matrix by $n$ (so that it has bounded norm), we obtain a uniform mixing time of $\OO(n^{3/2})$.
\end{proof}
\end{theorem}

The proof technique of \cref{thm:conical} is based on Xie \etal \cite{xkt23} 
where a partial measurement was used to reduce perfect state transfer to fractional revival.

%%%%%%%%%%%%%%%%%%%%%%%%%%%%%%%%%%%%%%%%%%%%%%%%%%%%%%%%%%%%%%%%%%%%%%%%%%%%%%%%%%%%%%%%%%%%%%%%%%%%%

\begin{figure}[h]
\begin{center}
\begin{tikzpicture}[-,>=stealth',auto,node distance=1.75cm,thick,
        main node/.style={circle,draw,font=\bfseries},
        main edge/.style={-,>=stealth'}, weird edge/.style={red,very thick}]

\tikzstyle{every node}=[draw, thick, shape=circle, style={circular drop shadow}];
\path (0.0,2.0) node [scale=0.9] (a0) {};
\path (+2.0,+1.0) node [scale=0.9] (a1) {};
\path (+2.0,-1.0) node [scale=0.9] (a2) {};
\path (0.0,-2.0) node [scale=0.9] (a3) {};
\path (-2.0,-1.0) node [scale=0.9] (a4) {};
\path (-2.0,+1.0) node [scale=0.9] (a5) {};

\tikzstyle{every edge}=[draw, ->, green];
\draw[->,thick]
	(a0) -- (a1);
\draw[->,thick]
	(a0) -- (a2);
\draw[->,thick]
	(a0) -- (a3);
\draw[->,thick]
	(a0) -- (a4);
\draw[->,thick]
	(a0) -- (a5);

\draw[->,gray]
    (a1) -- (a2);
\draw[->,gray]
    (a2) -- (a3);
\draw[->,gray]
    (a3) -- (a4);
\draw[->,gray]
    (a4) -- (a5);
\draw[->,gray]
    (a5) -- (a1);

\draw[->,gray]
    (a1) -- (a3);
\draw[->,gray]
    (a3) -- (a5);
\draw[->,gray]
    (a5) -- (a2);
\draw[->,gray]
    (a2) -- (a4);
\draw[->,gray]
    (a4) -- (a1);
\end{tikzpicture}
\quad\quad\quad\quad
\begin{tikzpicture}[-,>=stealth',auto,node distance=1.75cm,thick,
        main node/.style={circle,draw,font=\bfseries},
        main edge/.style={-,>=stealth'}, weird edge/.style={red,very thick}]

\tikzstyle{every node}=[draw, thick, shape=circle, style={circular drop shadow}];
\path (0.0,2.0) node [scale=0.9] (a0) {};
\path (+2.0,+1.0) node [scale=0.9] (a1) {};
\path (+2.0,-1.0) node [scale=0.9] (a2) {};
\path (0.0,-2.0) node [scale=0.9] (a3) {};
\path (-2.0,-1.0) node [scale=0.9] (a4) {};
\path (-2.0,+1.0) node [scale=0.9] (a5) {};

\draw[-]
	(a0) -- (a1)
	(a0) -- (a2)
	(a0) -- (a3)
	(a0) -- (a4)
	(a0) -- (a5);
\end{tikzpicture}
\caption{The chiral ghost trick: $K_6$ morphs into $K_{1,5}$.}
\label{fig:cone}
\end{center}
\end{figure}
%%%%%%%%%%%%%%%%%%%%%%%%%%%%%%%%%%%%%%%%%%%%%%%%%%%%%%%%%%%%%%%%%%%%%%%%%%%%%%%%%%%%%%%%%%%%%%%%%%%%%

In contrast to \cref{thm:nogo-clique}, we have the following corollary of \cref{thm:conical}.

\begin{corollary} \label{cor:clique}
For any $n \ge 5$, there is a unitary signing $\sigma$ so that the complete graph $K_n^\sigma$ has uniform mixing 
(in expected $\OO(n^{3/2})$ time).

\begin{proof}
Note $K_{n+1} = K_1 + K_n$. We consider two cases based on the parity of $n \ge 4$.

\smallskip
\par\noindent{\em Case 1}: $n$ is odd.
We use a circulant signing $A = \cir(0,-i,i,-i,i,\ldots,-i,i)$.
Clearly, $A\one_n = 0$ and zero is a simple eigenvalue. So, \cref{thm:conical} applies.

\smallskip
\par\noindent{\em Case 2}: $n$ is even
We apply a unitary signing using all roots of unity. Let $\omega = e^{2\pi i/(2m+1)}$.
For $k=1,\ldots,m$, let
\[
	B_0 = \begin{pmatrix} 0 & 1 \\ 1 & 0 \end{pmatrix},
	\ \ \
	B_k = 
	\begin{pmatrix} 
	\omega^{2k-1} & \omega^{2k} \\
	\omega^{2k}   & \omega^{2k-1}
	\end{pmatrix},
	\ \ \
	\tilde{B}_k = 
	\begin{pmatrix} 
	\omega^{2k} & \omega^{2k-1} \\
	\omega^{2k-1}   & \omega^{2k}
	\end{pmatrix},
\]
Then, we consider the skew block circulant given by
\[
	C = 
	\begin{pmatrix}
	B_0             & B_1         & B_2         & \ldots & B_m     \\
	\tilde{B}_m     & B_0         & B_1         & \ldots & B_{m-1} \\
	\tilde{B}_{m-1} & \tilde{B}_m & B_0         & \ldots & B_{m-2} \\
	\vdots          & \vdots      & \vdots      & \ldots & \vdots  \\
	\tilde{B}_1     & \tilde{B}_2 & \tilde{B}_3 & \ldots & B_{m-1}
	\end{pmatrix}
\]
From its definition, the first two rows of $C$ are orthogonal to the all-one vector as it contains the distinct
entries $0,1,\omega,\ldots,\omega^{2m}$. The second row was obtained from the first row by pairwise disjoint swaps
within each $2\times 2$ submatrix $B_k$.
Each pair of subsequent rows is obtained by a permutation of the $B_k$, for $k=0,1,\ldots,m$, and thus are also
orthogonal to the all-one vector.
Finally, observe that $C$ is Hermitian since $\tilde{B}_k = B^\dagger_{m-k+1}$, for $k=1,\ldots,m$.
Hence, \cref{thm:conical} also applies to this case.
\end{proof}
\end{corollary}

See \cref{fig:cone} for an example of the signing used in \cref{cor:clique}.

\medskip
\par\noindent{\em Example}:
For $n=7$ and $n=8$, the unitary signings used in the proof of \cref{cor:clique} are given by
\[
	\begin{pmatrix}
	0  & -i &  i & -i &  i & -i &  i \\
	i  &  0 & -i &  i & -i &  i & -i \\
	-i &  i &  0 & -i &  i & -i &  i \\
	i  & -i &  i &  0 & -i &  i & -i \\
	-i &  i & -i &  i &  0 & -i &  i \\
	i  & -i &  i & -i &  i &  0 & -i \\
	-i &  i & -i &  i & -i &  i &  0 
	\end{pmatrix},
	\ \ \
	\begin{pmatrix}
	0        & 1        & \omega   & \omega^2 & \omega^3 & \omega^4 & \omega^5 & \omega^6 \\
	1        & 0        & \omega^2 & \omega   & \omega^4 & \omega^3 & \omega^6 & \omega^5 \\
	\omega^6 & \omega^5 & 0        & 1        & \omega   & \omega^2 & \omega^3 & \omega^4 \\ 
	\omega^5 & \omega^6 & 1        & 0        & \omega^2 & \omega   & \omega^4 & \omega^3 \\
	\omega^4 & \omega^3 & \omega^6 & \omega^5 & 0        & 1        & \omega   & \omega^2 \\
	\omega^3 & \omega^4 & \omega^5 & \omega^6 & 1        & 0        & \omega^2 & \omega   \\
	\omega^2 & \omega   & \omega^4 & \omega^3 & \omega^6 & \omega^5 & 0        & 1        \\
	\omega   & \omega^2 & \omega^3 & \omega^4 & \omega^5 & \omega^6 & 1        & 0
	\end{pmatrix}.
\]
The first matrix is a $\pm i$-circulant matrix whereas the second matrix is a block skew-circulant
matrix whose entries are seventh roots of unity.
\bigskip

The following result generalizes Zhan's observation that $K_{1,3}$ has uniform mixing at time $2\pi/3\sqrt{3}$
(see \cite{gz17}). 

\begin{corollary} \label{cor:claw}
For any $n \ge 1$, $K_{1,n}$ has probabilistic uniform mixing in expected $n^{3/2}$ time.
\end{corollary}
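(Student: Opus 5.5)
The plan is to rerun the argument of \cref{thm:conical} with $X=\overline{K}_n$, so that $\widehat{X}=K_{1,n}$ and $A(X)=0$. \cref{thm:conical} cannot be quoted directly, because for $n\ge 2$ zero is not a simple eigenvalue of $A(\overline{K}_n)$. I will therefore check which parts of its proof still go through and repair the one that does not.

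\emph{Step 1 (spectrum and the conical start).} Since $A(X)\one_n=0$, the vectors $\psi_\pm$ from the proof of \cref{thm:conical} are still eigenvectors of $A(K_{1,n})$ with eigenvalues $\pm\sqrt{n}$. The rest of the spectrum is the eigenvalue $0$, whose eigenvectors have the form $(0,z)$ with $z\perp\one_n$; all of them vanish on the conical vertex. Starting from $e_0$ therefore gives exactly the computation already in \cref{thm:conical}:
\[
	e^{-iA(K_{1,n})t}e_0=\begin{pmatrix}\cos(\sqrt n t)\\ -\frac{i}{\sqrt n}\sin(\sqrt n t)\one_n\end{pmatrix},
\]
and this is uniform at time $t_0=\frac{1}{\sqrt n}\arccos(1/\sqrt{n+1})$.

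\emph{Step 2 (leaf start).} Write $e_u=\frac1{\sqrt n}\bigl(\frac{1}{\sqrt n}\one_n\bigr)+\bigl(e_u-\frac1n\one_n\bigr)$, with both pieces viewed inside the leaf coordinates. The second piece lies in the $0$-eigenspace and is frozen by the walk. The first piece evolves in the two-dimensional quotient spanned by $e_0$ and the normalized leaf vector, so the amplitude at $0$ is $-\frac{i}{\sqrt n}\sin(\sqrt n t)$. At $t_1=\pi/2\sqrt n$ the walk is therefore at the conical vertex with probability exactly $1/n$.

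\emph{Main obstacle and its fix.} \cref{thm:conical} uses the partial measurement $\{e_0e_0^T,I-e_0e_0^T\}$ and repeats it. Here that fails. At time $t_1$ the leaf part of the state is exactly the frozen vector $e_u-\frac1n\one_n$, because the $\psi_\pm$ contributions on the leaves cancel as $\cos(\pi/2)=0$. After an unsuccessful measurement the state is this frozen vector, which lies in the kernel and never again reaches vertex $0$. The same happens for any post-measurement state $(0,z)$ with $\one^T z=0$: the "by linearity" step cannot be used for arbitrary states.

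The fix I would use is a \emph{complete} vertex measurement at time $t_1$. This collapses the walk onto a basis vector $e_v$. With probability $1/n$ we get $v=0$, and we then run the walk for a further time $t_0$ (Step 1). Otherwise $v$ is a leaf, and we restart from $e_v$, where Step 2 applies verbatim. The trials are independent, so the number of rounds is geometric with mean $n$, and
\[
	\Exp[T]= n\cdot\frac{\pi}{2\sqrt n}+t_0=\OO(\sqrt n).
\]
Finally, rescaling the adjacency matrix by $n$ as in \cref{thm:conical} gives expected time $\OO(n^{3/2})$.

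For $n=1$ the graph is $K_2$, which already has uniform mixing at time $\pi/4$. For $n=3$ the rerun is consistent with Zhan's deterministic mixing time $2\pi/3\sqrt3$.
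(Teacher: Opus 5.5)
Your proof is correct, and it departs from the paper exactly where the paper's argument breaks down.

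The paper gives no separate proof. The corollary is presented as the case $X=\overline{K}_n$ of \cref{thm:conical}, and as you note, that theorem's simplicity hypothesis fails for $n\ge 2$. Your rerun shows that only $A(X)\one_n=0$ is actually needed.

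The substantive difference is the stopping rule. \cref{thm:conical} uses the two-outcome test $\{e_0e_0^T,\,I-e_0e_0^T\}$. It then claims ``by linearity'' that every post-failure state reaches the conical vertex with probability $1/n$ at time $\pi/2\sqrt n$. Linearity in fact gives conical amplitude $-\frac{i}{\sqrt n}\sin(\sqrt n t)\,\one_n^Tw$ from a start state $(0,w)$, which vanishes when $\one_n^Tw=0$. The subspace $\{(0,z):\one_n^Tz=0\}$ is invariant under $A(\widehat{X})$, because $\one_n^TA(X)=0$ by Hermiticity. The state at time $\pi/2\sqrt n$ is $-\frac{i}{\sqrt n}e_0$ plus a vector in that subspace.

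So the trapping you found is not special to the claw. In every cone covered by \cref{thm:conical}, one failed partial measurement keeps the walk away from the conical vertex forever.

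Your complete vertex measurement removes exactly the coherence that causes this. It resets the walk to a basis vector $e_v$. The rounds are then independent with success probability exactly $1/n$, which is what the bound $n\cdot\frac{\pi}{2\sqrt n}+t_0$ requires. The same repair fixes \cref{thm:conical} in general, at the negligible cost of a full computational-basis measurement in place of a single-vertex test.
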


\begin{corollary} \label{cor:euler}
For any nonsingular Eulerian graph $X$, $\widehat{X}$ has probabilistic uniform mixing.

\begin{proof}
An Eulerian graph has an Eulerian orientation which can be used to define a $\pm i$-signing of $X$.
Now, we apply \cref{thm:conical}.
\end{proof}
\end{corollary}

\par\noindent{\em Remark}:
If we allow self-loops and integer edge weights in graphs, then {\em any} connected graph has uniform mixing.
The Laplacian of a graph $X$ is defined as $L(X) = D(X) - A(X)$, where $D(X)$ is the diagonal
degree matrix of $X$ (where $D(X)_{a,a}$ is the degree of vertex $a$ in $X$). 
Note $L(X)$ is a weighted graph defined over $X$ which contains self-loops.

\begin{corollary} \label{cor:laplacian}
For any connected graph $X$, the integer-weighted graph $\widehat{L(X)}$ has probabilistic uniform mixing.
\end{corollary}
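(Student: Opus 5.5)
Corollary~\ref{cor:laplacian} follows by applying \cref{thm:conical} to the Hermitian matrix $A = L(X)$, viewed as the adjacency matrix of an integer-weighted graph with self-loops. The cone $\widehat{L(X)}$ then has adjacency matrix
\[
	\begin{pmatrix}
	0 & \one_n^T \\
	\one_n & L(X)
	\end{pmatrix},
\]
where the conical vertex is joined to every vertex of $X$ by an edge of weight one.

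First, we check the hypotheses of \cref{thm:conical}. $L(X)$ is real symmetric, hence Hermitian. Every row sum of $L(X)$ is $\deg(a) - \deg(a) = 0$, so $L(X)\one_n = 0$. For simplicity of the zero eigenvalue, we use the standard quadratic form identity
\[
x^T L(X) x = \sum_{ab \in E(X)} (x_a - x_b)^2 .
\]
This shows $L(X)$ is positive semidefinite. Any $x$ with $L(X)x = 0$ must be constant across every edge, hence constant on each connected component. Since $X$ is connected, the kernel of $L(X)$ is spanned by $\one_n$, so zero is a simple eigenvalue with eigenvector $\one_n$. (The complex case is identical: $x^\dagger L x = \sum |x_a - x_b|^2$.)

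Second, we confirm that the proof of \cref{thm:conical} never used the zero diagonal or unit weights of $A(X)$. It used only three facts:
\begin{enumerate}[(i)]
\item $A$ is Hermitian.
\item $A\one_n = 0$, which makes $\psi_\pm$ eigenvectors of the cone with eigenvalues $\pm\sqrt{n}$.
\item The remaining eigenvectors can be taken orthogonal to $\one_n$, which holds because $A$ is Hermitian and $\ker A$ is spanned by $\one_n$.
\end{enumerate}
All three hold for $L(X)$, so the same partial-measurement stopping rule gives probabilistic uniform mixing on $\widehat{L(X)}$. Concretely, the walk measures the conical vertex at intervals of $\pi/2\sqrt{n}$, succeeding with probability $1/n$ each time, and then evolves for time $t_0$.

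The only point needing care is whether the amplitude formula at the conical vertex, $-\frac{i}{\sqrt{n}}\sin(\sqrt{n}t)$ from any start vector orthogonal to $e_0$, survives self-loops. It does: that amplitude depends only on the projections onto $\psi_\pm$, and the $\psi_\theta$ components vanish at the conical coordinate. Since the diagonal of $L(X)$ enters only through $\theta \in \Sp(L(X))$, no obstacle arises beyond verifying simplicity of the zero eigenvalue, which is exactly where connectivity is used.
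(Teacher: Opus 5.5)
Your reduction is the one the paper intends: the corollary is stated without proof as an instance of \cref{thm:conical}. Your check of the hypotheses is correct and more explicit than the paper's: $L(X)$ is real symmetric, $L(X)\one_n=0$, and $\ker L(X)=\mathrm{span}\{\one_n\}$ follows from the quadratic form together with connectivity. The problem is in the stopping rule you re-derive. You inherit this error from the ``by linearity'' step in the proof of \cref{thm:conical}. For a unit start vector $(0,w)$, the projections onto $\psi_\pm$ are $\pm\frac{1}{\sqrt{2n}}\one_n^T w$. So the conical amplitude is $-\frac{i}{\sqrt n}(\one_n^T w)\sin(\sqrt n t)$, and it equals $-\frac{i}{\sqrt n}\sin(\sqrt n t)$ only when $\one_n^T w=1$, not for every start vector orthogonal to $e_0$.

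This breaks the claim ``succeeding with probability $1/n$ each time.'' Both $W=\mathrm{span}\{e_0,(0,\one_n)\}$ and $W^\perp=\{(0,z): z\perp\one_n\}$ are invariant under $A(\widehat{L(X)})$. Each of the projectors $e_0e_0^T$ and $I-e_0e_0^T$ maps $W$ into $W$ and $W^\perp$ into $W^\perp$, and $W^\perp$ is orthogonal to $e_0$. Starting from $e_u$, the $W$-component has squared norm $1/n$, and at $t_1=\pi/2\sqrt n$ it sits entirely on $e_0$. So if the partial measurement fails, the post-measurement state lies in $W^\perp$ and the conical vertex is never reached again. Under any measurement schedule, the total success probability is at most $1/n$, not $1/n$ per round. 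Your procedure therefore fails to terminate with probability at least $1-1/n$. To get a genuine Las Vegas procedure you must re-randomize after a failure. For example, measure in the full vertex basis at time $t_1$: the outcome is $0$ with probability exactly $1/n$. Otherwise the state collapses to some $e_v$ with $v\neq 0$, which again has weight $1/n$ on $W$. The rounds are then independent with expected number $n$, after which evolving for $t_0$ yields the uniform distribution.
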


%%%%%%%%%%%%%%%%%%%%%%%%%%%%%%%%%%%%%%%%%%%%%%%%%%%%%%%%%%%%%%%%%%%%%%%%%%%%%%%%%%%%%%%%%%%%%%%%%%%%%
\section{Chiral speedup}

Given an oriented graph $X$, note that the cone $\widehat{X} = K_1 + X$ is also an oriented graph
as the edges incident to the conical vertex can oriented with either $+i$ (outgoing) or $-i$ (incoming).

\begin{claim} \label{claim:k4}
There is a signing $\sigma$ of $K_4$ so that $K_4^\sigma$ has uniform mixing at $\pi/3\sqrt{3}$.

\begin{proof}
We sign $K_4$ using $\sigma$ as follows:
\[	
	A(K_4^\sigma) = 
	\begin{pmatrix}
	0 & -i & -i & -i \\
	i & 0  & -i & i  \\
	i & i  & 0  & -i \\
	i & -i & i  & 0
	\end{pmatrix}.
\]
Observe that $K_4^\sigma$ is switching equivalent to $K_1 + \vec{K}_3$ (by a $-i$ switching at the conical vertex).

But, $K_1 + \vec{K}_3$ and $K_{1,3}$ have the same quotient under an equitable partition which places the conical 
vertex in a singleton cell. Thus, by \cref{lemma:quotient-walk}, their quantum walks are similar. 
Now, recall that $K_{1,3}$ has uniform mixing at $\pi/3\sqrt{3}$ starting at the conical vertex (see \cite{gz17}).

Finally, observe that $K_1 + \vec{K}_3$ has a switching automorphism that sends the conical vertex to any other vertex.
This shows we have a {\em global} uniform mixing.
\end{proof}
\end{claim}

This is a speedup of Zhan's observation that $K_1 + \overline{K}_3$ has uniform mixing at time $2\pi/3\sqrt{3}$.
Recall that the Hamming graph $H(n,d)$ is defined as the $n$-fold Cartesian product of the complete graph $K_d$,
namely, $H(n,d) = (K_d)^{\Box n}$. It is called the Rook's graph if $n=2$ as it encodes all legal movements of a 
rook on a $n \times n$ chessboard.

\begin{theorem} \label{thm:hamming}
For $n \ge 1$, there is a signed $H(n,4)$ with uniform mixing at time $\pi/3\sqrt{3}$.

\begin{proof}
Let $\sigma$ be a signing of $K_4$ so that $K_4^\sigma$ is the oriented cone $K_1 + \vec{K}_3$.
By \cref{claim:k4}, $K_4^\sigma$ has uniform mixing at $\pi/3\sqrt{3}$.
As $H(n,4) = (K_4^\sigma)^{\Box n}$ and uniform mixing is closed under Cartesian product by \cref{lemma:closure},
the claim follows.
\end{proof}
\end{theorem}

%%%%%%%%%%%%%%%%%%%%%%%%%%%%%%%%%%%%%%%%%%%%%%%%%%%%%%%%%%%%%%%%%%%%%%%%%%%%%%%%%%%%%%%%%%%%%%%%%%%%%
\begin{figure}[h]
\begin{center}
\begin{tikzpicture}[-,>=stealth',auto,node distance=1.75cm,thick,
        main node/.style={circle,draw,font=\bfseries},
        main edge/.style={-,>=stealth'}, weird edge/.style={red,very thick}]
\tikzset{every loop/.style={min distance=5mm, in=65, out=115}}
% K4 
  \node[main node, circular drop shadow] (0) {};
  \node[main node, circular drop shadow] (1) [below right of=0] {};
  \node[main node, circular drop shadow] (2) [below left of=0] {};
  \node[main node, circular drop shadow] (3) [below right of=2] {};

  \path[main edge]
    (0) edge node {} (1)
    (0) edge node {} (1)
    (0) edge node {} (2)
    (0) edge node {} (3)
    (1) edge node {} (2)
    (2) edge node {} (3)
    (3) edge node {} (1);
\end{tikzpicture}
\quad \quad \quad
\begin{tikzpicture}[-,>=stealth',auto,node distance=1.75cm,thick,
        main node/.style={circle,draw,font=\bfseries},
        main edge/.style={-,>=stealth'}, weird edge/.style={red,very thick}]
\tikzset{every loop/.style={min distance=5mm, in=65, out=115}}
% K(1,3)
  \node[main node, circular drop shadow] (0) {};
  \node[main node, circular drop shadow] (1) [below right of=0] {};
  \node[main node, circular drop shadow] (2) [below left of=0] {};
  \node[main node, circular drop shadow] (3) [below right of=2] {};

  \path[main edge]
    (0) edge node {} (1)
    (0) edge node {} (2)
    (0) edge node {} (3);
\end{tikzpicture}
\quad \quad \quad
\begin{tikzpicture}[-,>=stealth',auto,node distance=1.75cm,thick,
        main node/.style={circle,draw,font=\bfseries},
        main edge/.style={-,>=stealth'}, weird edge/.style={red,very thick}]
\tikzset{every loop/.style={min distance=5mm, in=65, out=115}}
% Oriented K4 
  \node[main node, circular drop shadow] (0) {};
  \node[main node, circular drop shadow] (1) [below right of=0] {};
  \node[main node, circular drop shadow] (2) [below left of=0] {};
  \node[main node, circular drop shadow] (3) [below right of=2] {};

  \path[->]
    (0) edge node {} (1)
    (0) edge node {} (1)
    (0) edge node {} (2)
    (0) edge node {} (3)
    (1) edge node {} (2)
    (2) edge node {} (3)
    (3) edge node {} (1);
\end{tikzpicture}
\caption{A tale of three uniform mixing times:
(a) $K_4$ at $\pi/4$.
(b) $K_{1,3}$ at $2\pi/3\sqrt{3}$. 
(c) Oriented $K_4$ at $\pi/3\sqrt{3}$.
}
\label{fig:k4-oriented}
\end{center}
\end{figure}
%%%%%%%%%%%%%%%%%%%%%%%%%%%%%%%%%%%%%%%%%%%%%%%%%%%%%%%%%%%%%%%%%%%%%%%%%%%%%%%%%%%%%%%%%%%%%%%%%%%%%

Among the Hamming graphs with uniform mixing, the oriented $H(n,4)$ in \cref{thm:hamming} 
has the fastest uniform mixing time. In comparison, the uniform mixing times of 
$H(n,2)$ and $H(n,4)$ are $\pi/4$ while $H(n,3)$ is $2\pi/9$.

See \cref{fig:k4-oriented} for graphs on four vertices with uniform mixing.

%%%%%%%%%%%%%%%%%%%%%%%%%%%%%%%%%%%%%%%%%%%%%%%%%%%%%%%%%%%%%%%%%%%%%%%%%%%%%%%%%%%%%%%%%%%%%%%%%%%%%
\section{Average mixing}

The {\em average mixing} matrix $\widehat{M}_X$ of a graph $X$ is the Cesaro limit of its mixing matrix $M_X(t)$.
That is, $\widehat{M}_X$ is given by
\[
    \widehat{M}_X = \lim_{T \rightarrow \infty} \frac{1}{T} \int_{0}^{T} M_X(t) dt
\]
Using the spectral theorem, we have that (see \cite{g13,cggz18})
\begin{equation} \label{eqn:schur-squared}
    \widehat{M}_X = \sum_r E_r \circ \overline{E}_r.
\end{equation}
As $\widehat{M}_X$ is a doubly stochastic matrix, it is a Markov chain constructed from taking 
the ergodic limit of a continuous-time {\em quantum} process $U_X(t)$.

\begin{definition}
A graph $X$ of order $n$ has {\em average uniform mixing} if $\widehat{M}_X = \frac{1}{n}J_n$.
\end{definition}

Godsil proved the following strong {\em No-Go} theorem for average uniform mixing on graphs.

\begin{theorem} \label{thm:k2} (Godsil \cite{g13})
A graph $X$ has average uniform mixing if and only if $X=K_2$.
\end{theorem}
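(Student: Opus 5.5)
The plan is to work directly with the formula $\widehat{M}_X = \sum_r E_r \circ \overline{E}_r$ from \cref{eqn:schur-squared}, where $A(X)$ is a real symmetric $0/1$ matrix, so each $E_r$ is a real symmetric projection. The ``if'' direction is immediate. For $K_2$ the eigenvalues are $\pm 1$ with $E_\pm = \frac{1}{2}\begin{pmatrix} 1 & \pm 1 \\ \pm 1 & 1\end{pmatrix}$, and $E_+\circ E_+ + E_-\circ E_- = \frac{1}{2}J_2$. For the converse, assume $\widehat{M}_X = \frac{1}{n}J_n$ with $n\ge 2$. I would derive a rank and trace contradiction unless $n=2$.

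The first step is to look at the diagonal. We have $(\widehat{M}_X)_{aa} = \sum_r (E_r)_{aa}^2 = 1/n$. On the other hand $\sum_r (E_r)_{aa} = I_{aa} = 1$, so by Cauchy--Schwarz $1 = (\sum_r (E_r)_{aa})^2 \le m \sum_r (E_r)_{aa}^2 = m/n$, where $m$ is the number of distinct eigenvalues. This gives $m \ge n$. Hence every eigenvalue is simple, and moreover equality holds in Cauchy--Schwarz, so $(E_r)_{aa} = 1/n$ for all $r$ and $a$. Writing $E_r = z_r z_r^T$ with $z_r$ a real unit vector, every entry of every $z_r$ is $\pm 1/\sqrt{n}$. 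The eigenvectors $\sqrt{n}\,z_r$ then form the columns of a real Hadamard matrix $H$ diagonalizing $A$.

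The second step uses realness, which is the key point where chirality is absent. Since $A = \frac{1}{n} H \Lambda H^T$ with $H$ a $\pm1$ Hadamard matrix, the diagonal of $A$ is $\frac{1}{n}\sum_r \lambda_r = \frac{1}{n}\Tr A = 0$. Each off-diagonal entry is $\frac{1}{n}\sum_r \lambda_r h_{ar}h_{br}$. I would use the standard fact that real Hadamard matrices of order $n>2$ require $4\mid n$. I would also use that the eigenvalues of $A$ are distinct algebraic integers closed under Galois conjugation, while the all-ones-type constraints come from $\sum_r \lambda_r^2 = \Tr A^2 = 2|E(X)|$.

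The decisive step, and the one I expect to be the main obstacle, is to rule out $n\ge 4$. The most promising route is to compare $\widehat{M}_X$ with off-diagonal data. Uniformity also forces $\sum_r (E_r)_{ab}^2 = 1/n$ for $a\ne b$, which is automatic here, so no contradiction comes from there. Instead I would exploit that $A$ commutes with each $E_r$, so $A$ lies in the span of the $z_r z_r^T$. Consider $D_a = \diag(h_{a\cdot})$. Then $A e_a$ has entries $\frac{1}{n}\sum_r \lambda_r h_{ar}h_{br} \in \{0,1\}$. Summing squares over $b$ gives $\deg(a) = \frac{1}{n}\sum_r \lambda_r^2$, so $X$ is regular of degree $k = 2|E|/n$, and $\one$ is an eigenvector, say $z_0 = \one/\sqrt{n}$ with eigenvalue $k$. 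Next I would use Godsil's known observation that when all eigenvalues are simple, every automorphism of $X$ has order at most 2 (since it acts by $\pm1$ on each $z_r$). I would also use that for a walk-regular graph with simple eigenvalues and flat eigenvectors, $A^2$ has constant diagonal $k$ and entries determined by products of $\pm1$ columns. I would try to show that the $\pm1$ vectors $h_r$ must be characters of an elementary abelian $2$-group, which would make $X$ a Cayley graph on $\ZZ_2^d$. Such a graph has every eigenvalue multiplicity $1$ only if its connection set gives distinct character sums for all $2^d$ characters. For $d\ge 2$ this fails, because character sums are integers in $[-k,k]$ with $2^d$ distinct values, yet they must also have the symmetric parity and sum constraints, forcing $k$ too large relative to $d$. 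This leaves $d=1$, which is $K_2$. Establishing the group structure from flatness is the step I am least sure of. If it fails, I would fall back on a direct counting argument with the integrality of $\frac{1}{n}\sum_r \lambda_r h_{ar}h_{br}$.
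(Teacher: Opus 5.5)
The paper gives no proof of this statement; it is quoted from Godsil \cite{g13}, so your argument has to stand on its own. Your first step is correct and efficient. Cauchy--Schwarz on the diagonal of $\widehat{M}_X$ forces $n$ distinct eigenvalues and $(E_r)_{aa}=1/n$, so $E_r=\frac{1}{n}h_rh_r^T$ with $h_r\in\{\pm1\}^n$. Your identity $\deg(a)=(A^2)_{aa}=\frac{1}{n}\sum_r\lambda_r^2$ then correctly gives $k$-regularity.

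The gap is the decisive step: nothing you wrote actually excludes $n\ge 3$. The reduction to a Cayley graph on $\ZZ_2^d$ is unsupported. It amounts to showing that the columns of $H$ are closed, up to sign, under entrywise products (equivalently, that $\CC[A]$ is Schur-closed). Simple spectrum plus flat real eigenvectors give no evident route to that. The other ingredients you list are true but do not by themselves produce a contradiction: $4\mid n$, Galois conjugation, automorphisms of order two, and the fact that $\frac{1}{n}\sum_r\lambda_r h_{ar}h_{br}\in\{0,1\}$. The ``direct counting'' fallback is never specified.

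The missing idea is that the parity count you sketch for character sums needs no group structure; it holds for every $\pm1$ eigenvector of a regular graph. Suppose $Ah=\theta h$ with $h\in\{\pm1\}^n$, and let $c_a$ be the number of neighbours $b$ of $a$ with $h_b\neq h_a$. Reading off coordinate $a$ gives $\theta h_a=\sum_{b\sim a}h_b=h_a(k-2c_a)$. Hence $\theta=k-2c_a\in\{k,k-2,\ldots,-k\}$, a set of only $k+1$ values. Since $X$ has $n$ distinct eigenvalues, $n\le k+1$, so $k=n-1$ and $X=K_n$. But $-1$ is an eigenvalue of $K_n$ with multiplicity $n-1$, so simplicity forces $n\le 2$.

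This is also exactly where realness is used. For a Hermitian (e.g.\ oriented) adjacency matrix your first step still yields flat eigenvectors, but their entries need not be $\pm1$; the circulants in \cref{thm:distinct} have Fourier eigenvectors, and the count breaks down. (Minor: $K_1$ also has $\widehat{M}_X=J_1$; the statement and your assumption $n\ge 2$ tacitly exclude it.)
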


In stark contrast to \cref{thm:k2}, we show there are infinite families of oriented graphs with average
uniform mixing.

\begin{theorem} \label{thm:distinct}
Any $(\ZZ/n\ZZ)$-circulant with distinct eigenvalues has average uniform mixing.

\begin{proof}
Let $X$ be a circulant of order $n$ with adjacency matrix $A(X) = \sum_{r=0}^{n-1} \lambda_r E_r$, where
the eigenvalues $\lambda_r$ are all distinct, and
$E_r = \ketbra{\zeta_r}{\zeta_r}$ where $\ket{\zeta_r}$ is the $r$-th column of the Fourier matrix
whose $j$-th entry is $\braket{j}{\zeta_r} = \frac{1}{\sqrt{n}}\zeta_n^{jr}$, for $j,r = 0,1,\ldots,n-1$.
Then,
\[
    \widehat{M}_X = \sum_{r=0}^{n-1} E_r \circ \overline{E}_r = \sum_{r=0}^{n-1} \frac{1}{n^2} J_n = \frac{1}{n}J_n
\]
as $\overline{E}_r = E_{-r}$.
\end{proof}
\end{theorem}

Recall we call a graph {\em oriented} if its edges have $\{\pm i\}$ weights.

\begin{corollary}
The oriented odd-cycle has average uniform mixing.

\begin{proof}
The eigenvalues of an oriented cycle $\vec{C}_n$ of order $n$ are $\lambda_k = 2\sin(2\pi k/n)$, for $k=0,1,\ldots,n-1$,
which are all distinct when $n$ is odd. As $\vec{C}_n$ is a $(\ZZ/n\ZZ)$-circulant, we may apply \cref{thm:distinct}.
\end{proof}
\end{corollary}

\begin{figure}[h]
\begin{center}
\begin{tikzpicture}[-,>=stealth',auto,node distance=1.75cm,thick,
        main node/.style={circle,draw,font=\bfseries},
        main edge/.style={-,>=stealth'}, weird edge/.style={red,very thick}]
\tikzset{every loop/.style={min distance=5mm, in=65, out=115}}
% Transitive K4 
  \node[main node, circular drop shadow] (0) {};
  \node[main node, circular drop shadow] (1) [right of=0] {};
  \node[main node, circular drop shadow] (2) [below of=1] {};
  \node[main node, circular drop shadow] (3) [left of=2] {};

  \path[->]
    (0) edge node {} (1)
    (0) edge node {} (2)
    (0) edge node {} (3)
    (1) edge node {} (2)
    (1) edge node {} (3)
    (2) edge node {} (3);
\end{tikzpicture}
\quad\quad\quad\quad
\begin{tikzpicture}[-,>=stealth',auto,node distance=1.75cm,thick,
        main node/.style={circle,draw,font=\bfseries},
        main edge/.style={-,>=stealth'}, weird edge/.style={red,very thick}]

\tikzstyle{every node}=[draw, thick, shape=circle, style={circular drop shadow}];
\path (0.0,2.25) node [scale=1.0] (a0) {};
\path (+1.25,+1.25) node [scale=1.0] (a1) {};
\path (+0.75,0.0) node [scale=1.0] (a2) {};
\path (-0.75,0.0) node [scale=1.0] (a3) {};
\path (-1.25,+1.25) node [scale=1.0] (a4) {};

\draw[->]
	(a0) -- (a1);
\draw[->]
	(a1) -- (a2);
\draw[->]
	(a2) -- (a3);
\draw[->]
	(a3) -- (a4);
\draw[->]
	(a4) -- (a0);
\end{tikzpicture}
\caption{Oriented graphs with average uniform mixing:
(a) Transitive tournament $T_{n}$.
(b) Oriented odd-cycle $\vec{C}_{2n+1}$.
}
\label{fig:average-mixing}
\end{center}
\end{figure}

Next, we observe a family of oriented graphs with average uniform mixing for any order.

\begin{corollary}
For any $n \ge 2$, the oriented skew $(\ZZ/n\ZZ)$-circulant (corresponding to the transitive tournament)
has average uniform mixing.

\begin{proof}
A skew-circulant matrix $A$ whose first row is given by $(a_0,a_1,\ldots,a_{n-1})$ can be written as
$A = \sum_{j=0}^{n-1} a_jP^j$, where $P$ maps $e_j$ to $e_{j-1}$, $j=1,\ldots,n$, and $e_0$ to $-e_{n-1}$.
Let $z_k$ be a normalized vector whose entries are given by
$e_j^T z_k = e^{ijk\pi/n}/\sqrt{n}$. The {\em flat} matrix $Z$ whose $k$-th column is $z_k$ is a common eigenbasis
for all skew-circulants (see Davis \cite{d94}).
For the acyclic tournament $A=(0,1,1,\ldots,1)$, the oriented $B=-iA$ has eigenvalues
$\lambda_j = \cot((2j+1)\pi/2n)$, $j=0,1,\ldots,n-1$, which are all {\em distinct}.
Thus, \cref{thm:distinct} applies.
\end{proof}
\end{corollary}

See \cref{fig:average-mixing} for small examples of circulants with average uniform mixing.

A graph $X$ has {\em universal} perfect state transfer if it has perfect state transfer between every pair of vertices;
that is, for all $a,b \in V(X)$, there is a time $t$ so that $|\bra{a}e^{-iA(X)t}\ket{b}| = 1$.

\begin{corollary}
Any $(\ZZ/n\ZZ)$-circulant with universal perfect state transfer has average uniform mixing.

\begin{proof}
Any graph with universal perfect state transfer must have distinct eigenvalues (see \cite{cfghst14}).
Now, apply \cref{thm:distinct}.
\end{proof}
\end{corollary}

\subsection{Non-abelian No-Go theorem}

Next, we consider Cayley graphs of non-abelian groups.
We will appeal to the continuous-time version of a result of Godsil and Zhan (see \cite{gz19}, Theorem 10.4).

\ignore{
\begin{theorem} \label{thm:trace-bound} (Godsil and Zhan \cite{gz19}, Theorem 10.4)\\
Suppose $X$ is a graph of order $n$ with spectral decomposition $A(X) = \sum_{k=1}^{d} \lambda_k E_k$
where the multiplicity of $\lambda_k$ is $m_k$. Then,
\[
    \Tr\widehat{M}_X \ge \frac{1}{n}\sum_{k=1}^{d} m_k^2 \ge 1.
\]
Moreover, the minimum is achieved whenever each eigenvalue is simple and $X$ is walk-regular.
\end{theorem}

Note that a graph $X$ is walk-regular if all of its vertices are cospectral.
}

\begin{lemma}
Given any graph $X$ of order $n$ with Hermitian adjacency matrix $A = \sum_r \lambda_r E_r$, we have
\[
    \Tr\widehat{M}_X \ \ge \ \frac{1}{n}\sum_{k=1}^{d} m_k^2 \ \ge \ 1
\]
where $m_r$ is the multiplicity of $\lambda_r$. Moreover, equality is achieved if each $E_r$ has constant diagonal
and each eigenvalue is simple.

\begin{proof}
By \cref{eqn:schur-squared}, we have
\[
	\Tr\widehat{M}_X 
	\ = \ \sum_r \Tr(E_r \circ \overline{E_r}) 
	\ = \ \sum_r \sum_a |(E_r)_{a,a}|^2.
\]
Apply Cauchy-Schwarz on the inner sum to get
\[
	\sum_a |(E_r)_{a,a}|^2 \ \ge \ \frac{1}{n}\left(\sum_a (E_r)_{a,a}\right)^2.
\]
But, $\Tr(E_r) = \sum_a (E_r)_{a,a} = m_r$, the multiplicity of eigenvalue $\lambda_r$. Therefore,
\[
	\Tr\widehat{M}_X \ \ge \ \frac{1}{n} m_r^2 \ \ge \ 1,
\]
where the last inequality follows by a standard optimization argument.
\end{proof}
\end{lemma}

If $\widehat{M}_X = \frac{1}{n}J_n$, then its trace achieves the lower bound of $1$.

For an irreducible representation $\rho$ of a finite group $G$, we let $d_\rho$ be
the degree (or dimension) of $\rho$.

\begin{theorem} \label{thm:babai} (Babai \cite{b79})\\
Let $X=\cay(G,S)$ be a Cayley graph over a finite group $G$ with a connection set $S$ where
$S = S^{-1}$ ($S$ is closed under taking inverses) and $1 \not\in S$ ($X$ has no self-loops).
Then,
\begin{enumerate}
\item $A(X) = \sum_{g \in S} R(g)$ where $R(g)$ is the left-regular representation of $G$.
\item There is a unitary matrix $U$ so that $U^\dagger A(X)U$ is a diagonal matrix with entries
    $\sum_{g \in S} R_\rho(g)$ (repeated $d_\rho$ times) for each irreducible representation $\rho$ of $G$.
\item The eigenvalues of $A(X)$ are given by the eigenvalues of $\sum_{g \in S} R_\rho(g)$ (each repeated
    $d_\rho$ times).
\end{enumerate}
\end{theorem}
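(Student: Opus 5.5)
The plan is to prove the three parts in order, with the diagonalization in part 2 carrying the real content via the decomposition of the group algebra of $G$.

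For part 1, I index rows and columns of $A(X)$ by $G$ and use the convention that $R(g)$ is the permutation matrix $R(g)e_h = e_{hg^{-1}}$ (or $e_{gh}$; the choice of side only relabels which multiplication defines adjacency). Then $\sum_{g\in S} R(g)$ has $(x,h)$-entry $\iverson{x h^{-1}\in S}$ or $\iverson{h^{-1}x \in S}$, and this is exactly the adjacency relation of $\cay(G,S)$. The hypothesis $S=S^{-1}$ makes the matrix symmetric, since $R(g)^T = R(g^{-1})$. The hypothesis $1\notin S$ makes the diagonal zero. For a signed or oriented version, one would instead take $\sum_g w(g)R(g)$ with $w(g^{-1})=\overline{w(g)}$, and the same argument goes through.

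For part 2, I use the standard decomposition of the regular representation: $R \cong \bigoplus_\rho d_\rho\,\rho$, summed over a complete set of inequivalent irreducible unitary representations. Concretely, take $U$ to be the unitary Fourier transform on $G$. Its columns are the vectors $\sqrt{d_\rho/|G|}\,\overline{\rho(g)_{ij}}$, indexed by $(\rho,i,j)$. Unitarity of $U$ follows from the Schur orthogonality relations together with $\sum_\rho d_\rho^2 = |G|$. Writing $\rho(g)$ as unitary, one checks that
\[
U^\dagger R(g) U = \bigoplus_\rho \left(I_{d_\rho}\otimes \rho(g)\right)
\]
up to a fixed ordering and possible transposition or conjugation. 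By linearity,
\[
U^\dagger A(X) U = \bigoplus_\rho \left(I_{d_\rho} \otimes \sum_{g\in S}\rho(g)\right).
\]
This is block diagonal, with the block $\sum_{g\in S}\rho(g)$ repeated $d_\rho$ times. I read the statement's ``diagonal matrix'' as block diagonal. Each block is Hermitian because $S=S^{-1}$ and $\rho(g^{-1})=\rho(g)^\dagger$, so a further block-wise unitary diagonalizes it fully if a literal diagonal form is wanted.

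Part 3 is then immediate: the spectrum of a block diagonal matrix is the union of the block spectra, counted with multiplicity, so each eigenvalue of $\sum_{g\in S}\rho(g)$ appears $d_\rho$ times.

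The main obstacle is the verification that $U$ intertwines $R$ with $\bigoplus_\rho I_{d_\rho}\otimes\rho$. I would do this by computing the action of $R(g)$ on a single column $u_{\rho,i,j}$. Using the homomorphism property $\rho(hg^{-1}) = \rho(h)\rho(g)^{-1}$, this action is a linear combination of the $u_{\rho,i,k}$, with coefficients given by entries of $\rho(g)$ or its conjugate. So for each fixed $\rho$ and each fixed $i$, the span of $\{u_{\rho,i,j}\}_j$ is an invariant copy of $\rho$ or $\overline{\rho}$. Since $\overline{\rho}$ ranges over all irreducibles as $\rho$ does, the final statement is unaffected.
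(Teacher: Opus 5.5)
Your proof is correct and is essentially the standard argument behind the result, which the paper only cites and does not prove: Babai likewise realizes $A(X)$ as the image of $\sum_{g\in S} g$ under the regular representation and then uses $R\cong\bigoplus_\rho d_\rho\,\rho$, and your Fourier matrix $U$ makes that intertwiner explicit. (Your weighted form $\sum_g w(g)\,g$ with $w(g^{-1})=\overline{w(g)}$ is the version the paper actually needs for oriented Cayley graphs.) The $\rho$-versus-$\overline{\rho}$ hedge is harmless and can be removed: with the left-regular convention $R(g)e_h=e_{gh}$ named in the statement, your conjugated columns satisfy $R(g)u_{\rho,i,j}=\sum_k \rho(g)_{ki}\,u_{\rho,k,j}$, so for each fixed $j$ the span of $\{u_{\rho,k,j}\}_k$ carries $\rho$ itself.
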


\begin{corollary} \label{cor:nonabelian}
Let $X=\cay(G,S)$ be the Cayley graph of a nonabelian finite group $G$ with an inverse-closed
connection set $S$ which does not contain the identity. Then, $X$ has no average uniform mixing.

\begin{proof}
Any nonabelian finite group has an irreducible representation of degree larger than one.
By \cref{thm:babai}, $\cay(G,S)$ must have a repeated eigenvalue.
\end{proof}
\end{corollary}

%%%%%%%%%%%%%%%%%%%%%%%%%%%%%%%%%%%%%%%%%%%%%%%%%%%%%%%%%%%%%%%%%%%%%%%%%%%%%%%%%%%%%%%%%%%%%%%%%%%%%
\section{Conclusions} \label{section:open-questions}

In this work, we introduced a reduction from global to {\em local} uniform mixing using
weak measurements. This is similar to the reduction from perfect state transfer to
fractional revival (see \cite{xkt23}). It can also be viewed as a {\em stopping rule} for uniform mixing 
in quantum walks. We then combined this technique with {\em unitary signing} of graphs (see \cite{az20}). 
Using this sign-and-reduce technique, we show that there are oriented complete graphs 
with probabilistic (Las Vegas) uniform mixing. This complements the fact that no complete graph 
has uniform mixing (except $K_2,K_3,K_4$).

On the other hand, by unitary signing alone, we found families of oriented circulants with 
average uniform mixing which is a No-Go violation of Godsil's theorem \cite{g13}.
Here, chirality helps construct oriented graphs with {\em distinct} eigenvalues
in $(\ZZ/n\ZZ)$-circulants. But, we identified a barrier towards extending this
to nonabelian groups (due to a basic fact from group representation).

We conclude with a few questions for future explorations:
\begin{enumerate}[(a)]
\item Are there high-diameter trees with probabilistic uniform mixing?
\item Is there a speed limit for uniform mixing? In particular, can the methods developed by 
	Luby \etal \cite{lsz93} be used to optimized our Las Vegas algorithm?
\item Are there graphs with fast Monte Carlo uniform mixing?
	Here, we require algorithms with bounded running time but which achieves $\epsilon$-uniform mixing
	for some negligible $\epsilon > 0$.
\end{enumerate}

%%%%%%%%%%%%%%%%%%%%%%%%%%%%%%%%%%%%%%%%%%%%%%%%%%%%%%%%%%%%%%%%%%%%%%%%%%%%%%%%%%%%%%%%%%%%%%%%%%%%%
\section*{Acknowledgments}

We thank Ada Chan, Chris Godsil and Xiaohong Zhang for helpful discussions.
This work is supported by NSF grants CCF-2348399 and OSI-2427020.

%%%%%%%%%%%%%%%%%%%%%%%%%%%%%%%%%%%%%%%%%%%%%%%%%%%%%%%%%%%%%%%%%%%%%%%%%%%%%%%%%%%%%%%%%%%%%%%%%%%%%

%%%%%%%%%%%%%%%%%%%%%%%%%%%%%%%%%%%%%%%%%%%%%%%%%%%%%%%%%%%%%%%%%%%%%%%%%%%%%%%%%%%%%%%%%%%%%%%%%%%%%

\begin{thebibliography}{1}

\bibitem{aakv01}
D. Aharonov, A. Ambainis, J. Kempe, U. Vazirani,
Quantum Walks on Graphs,
{\em Proceedings of the Annual ACM Symposium on the Theory of Computing} (STOC), p50-59, 2001.

\bibitem{abtw03}
A. Ahmadi, R. Belk, C. Tamon, C. Wendler,
On Mixing in Continuous-time Quantum Walks on Some Circulant Graphs,
{\em Quantum Information and Computation} {\bf 3}(6):611-618, 2003.

\bibitem{az20}
N. Alon, K. Zheng,
Unitary signing and induced subgraphs of Cayley graphs of $\ZZ_2^n$,
{\em Advances in Combinatorics}, November 09, 2020.

\bibitem{b79}
L. Babai,
Spectra of Cayley Graphs,
{\em Journal of Combinatorial Theory B} {\bf 27}:180-189, 1979.

\bibitem{c02}
A. Cabello,
Bell's theorem with and without inequalities for the three-qubit Greenberger-Horne-Zeilinger and W states,
{\em Physical Review A} {\bf 65}:032108, 2002.

\bibitem{cfghst14}
S. Cameron, S. Fehrenbach, L. Granger, O. Hennigh, S. Shrestha, C. Tamon,
Universal State Transfer on Graphs,
{\em Linear Algebra and Its Applications} {\bf 455}:115-142, 2014.

%\bibitem{cgkst17}
%E. Connelly, N. Grammel, M. Kraut, L. Serazo, C. Tamon,
%Universality in Perfect State Transfer,
%{\em Linear Algebra and Its Applications} {\bf 531}:516-532, 2017.

\bibitem{cggz18}
G. Coutinho, C. Godsil, K. Guo, H. Zhan,
A New Perspective on the Average Mixing Matrix,
{\em Electronic Journal of Combinatorics} {\bf 25}(4), \#P4.14, 2018.

\bibitem{d94}
P. Davis, 
{\em Circulant Matrices}, second edition,
Chelsea, 1994.

\bibitem{dp06}
E. D'Hondt, P. Panangaden,
The computational power of the W and GHZ states,
{\em Quantum Information and Computation} {\bf 6}(2):173-183, 2006.

\bibitem{dvc00}
W. D\"{u}r, G. Vidal, J.I. Cirac,
Three qubits can be entangled in two inequivalent ways,
{\em Physical Review A} {\bf 62}:062314, 2000.

\bibitem{g13}
C. Godsil,
Average Mixing of Continuous Quantum Walks,
{\em Journal of Combinatorial Theory A} {\bf 120}:1649-1662, 2013.

\bibitem{gr01}
C. Godsil, G. Royle,
{\em Algebraic Graph Theory},
Springer, 2001.

\bibitem{gz19}
C. Godsil, H. Zhan,
Discrete-time Quantum Walks and Discrete Structures,
{\em J. Combinatorial Theory A} {\bf 167}:181-212, 2019.

\bibitem{gz17}
C. Godsil, H. Zhan,
Uniform Mixing on Cayley Graphs,
{\em Electronic Journal of Combinatorics} {\bf 24}, Issue 3, 2017.

\bibitem{h95}
W. Haemers,
Interlacing Eigenvalues and Graphs,
{\em Linear Algebra and Its Applications} {\bf 227-228}:593-616, 1995.

\bibitem{hj13}
R. Horn, C. Johnson,
{\em Matrix Analysis}, second edition,
Cambridge University Press, 2013.

\bibitem{lmw18}
V. Lipinska, G. Murta, S. Wehner,
Anonymous transmission in a noisy quantum network using the W state,
{\em Physical Review A} {\bf 98}:052320, 2018.

\bibitem{lw95}
L. Lovasz, P. Winkler,
Exact Mixing in an Unknown Markov Chain,
{\em Electronic Journal of Combinatorics} {\bf 2}, 1995.

\bibitem{lsz93}
M. Luby, A. Sinclair, D. Zuckerman,
Optimal speedup of Las Vegas algorithms,
{\em Information Processing Letters} {\bf 47}:173-180, 1993.

\bibitem{mr02}
C. Moore, A. Russell,
Quantum walks on the Hypercube,
{\em Proc. 6th International Workshop on Randomization and Approximation Techniques in Computer Science (RANDOM 2002)}, 
164-178, 2002.

\bibitem{mr95}
R. Motwani, P. Raghavan,
{\em Randomized Algorithms},
Cambridge University Press, 1995.

\bibitem{nc00}
M. Nielsen, I. Chuang,
{\em Quantum Computation and Quantum Information},
Cambridge University Press, 2000.

\bibitem{s25}
P. Sin,
Uniform mixing in continuous-time quantum walks on oriented, nonabelian Cayley graphs,
\href{https://arxiv.org/abs/2510.08376}{https://arxiv.org/abs/2510.08376}.

\bibitem{xkt23}
W. Xie, A. Kay, C. Tamon,
Breaking the Speed Limit of Perfect Quantum State Transfer,
{\em Physical Review A} {\bf 108}:012408, 2023.

\end{thebibliography}
\end{document}
%%%%%%%%%%%%%%%%%%%%%%%%%%%%%%%%%%%%%%%%%%%%%%%%%%%%%%%%%%%%%%%%%%%%%%%%%%%%%%%%%%%%%%%%%%%%%%%%%%%%%